\newtheoremstyle{mythmstyle}
{1em} 
{1em} 
{\itshape} 
{} 
{\bfseries} 
{.} 
{ } 
{} 
\newtheoremstyle{myremarkstyle}
{1em} 
{1em} 
{\rmfamily} 
{} 
{\bfseries} 
{.} 
{ } 
{} 
\theoremstyle{mythmstyle}
\newtheorem{lemma}{Lemma}
\theoremstyle{myremarkstyle}
\newtheorem{remark}{Remark}
\newcommand{\curl}{\nabla \times }
\newcommand{\hcurl}{\widehat{\nabla} \times }
\newcommand{\bu}{\mathbf{u}}
\newcommand{\bv}{\mathbf{v}}
\newcommand{\bz}{\mathbf{z}}
\newcommand{\bU}{\mathbf{U}}
\begin{document}
	
	%
	\title[Numerical Shape Eigenvalue Optimization]{Numerical Eigenvalue
		Optimization by Shape-Variations for Maxwell's Eigenvalue Problem}
	
	\author{Christine Herter}
	\address{Universität Hamburg, Fachbereich Mathematik, Bundesstr. 55,
		20146 Hamburg}
	\email{christine.herter@uni-hamburg.de}
	\author{Sebastian Schöps}
	\address{Schloßgartenstr. 8
		64289 Darmstadt}
	\email{sebastian.schoeps@tu-darmstadt.de}
	\author{Winnifried  Wollner}
	\address{Universität Hamburg, Fachbereich Mathematik, Bundesstr. 55,
		20146 Hamburg}
	\email{winnifried.wollner@uni-hamburg.de}
	
	\keywords{shape optimization, Maxwell eigenvalue, adjoint calculus,
		damped inverse BFGS method}
	
	\subjclass[2020]{49M41, 49M37, 49Q10}


\begin{abstract}
	In this paper we consider the free-form optimization of eigenvalues in electromagnetic systems by means of shape-variations with respect to small deformations. 
	The objective is to optimize a particular eigenvalue to a target value.
	We introduce the mixed variational formulation of the Maxwell eigenvalue problem introduced
	by Kikuchi (1987) in function spaces of \(H(\operatorname{curl}; \Omega)\) and \(H^1(\Omega)\). 
	To handle this formulation, 
	suitable transformations of these spaces are utilized, e.g., of Piola-type for the space of \(H(\operatorname{curl}; \Omega)\). 
	This allows for a formulation of the
	problem on a fixed reference domain together with a domain mapping.
	Local uniqueness of the solution is obtained by a normalization of the the eigenfunctions. This allows us to derive adjoint formulas for the derivatives of the eigenvalues with respect to domain variations. For the solution of the resulting optimization problem, we develop a particular
	damped inverse BFGS method that allows for an easy line search procedure while retaining
	positive definiteness of the inverse Hessian approximation.
	The infinite dimensional problem is discretized by mixed finite elements
	and a numerical example shows the efficiency of the proposed approach.
\end{abstract}


	\maketitle
	
\section{Introduction}
In this paper, we consider a numerical eigenvalue optimization by shape-variations for Maxwell's eigenvalue problem. This work is motivated by particle accelerator cavities, where an accurate geometry is essential for the acceleration of the particles.	
Eigenvalue optimization of resonating structures, e.g., in context of electromagnetic cavities, is a challenging area of research because the therein resulting eigenmodes are easily effected to small deformations of the domain. 
The goal of electromagnetic cavities is to achieve an acceleration eigenmode for the particles.
The most relevant eigenmode is the fundamental Transverse Magnetic (TM) mode, shown in Figure~\ref{tmMode}.  
For a detailed description of such a cavity as well as its associated components, we refer to the paper about Superconducting TESLA cavities by~\cite{PhysRevSTAB}.
\begin{figure}[ht!]
	\centering
	\includegraphics[width = 0.65\linewidth]{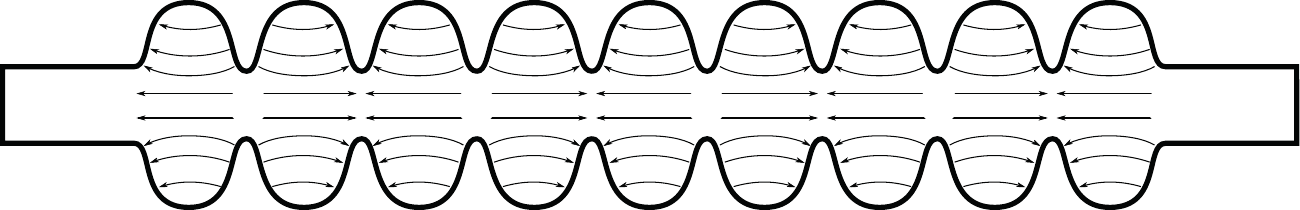}
	\caption{Electric field of the $\mathrm{TM}_{010}$, also called $\pi$-mode}
	\label{tmMode}
\end{figure}

The analytic sensitives of eigenpairs of Maxwell's eigenvalue problem have been considered in isogeometric analysis, e.g., in work of~\cite{Anna_Ziegler_2023}, where the sensitivities of the eigenpair have been computed with respect to a scalar parameter. 
The therein parameter optimization has been done with a simplified formulation of Maxwell's eigenvalue problem excluding the divergence-free constraint on the eigenfunction.
Due to the formulation, care has to be taken in removing spurious solutions.
Further, the deformation and its derivatives are expressed in terms of parameters explicitly dependent on weights and control points.
Further work of optimizing resonant structures on this approach has been seen in, e.g.,~\cite{Lewis2003TheMO}.
Goals for the optimization of these cavities are uncertainty quantification, see, e.g.,~\cite{UncertaintyQuantiNiklas}, and mode tracking, see e.g.,~\cite{modetracking_jorkowski_2018}.
In context of cavities, there already exists several references to multi-objective optimization, e.g., \cite{multiobj2019rienen,optimizationCavitiy2023rienen} as well as parameter optimization, e.g., \cite{PhysRevSTAB.12.114701}. Further, for solving multi-objective optimization problems, we refer to \cite{MultiobjectiveShapeOptimizationEvolutionaryAlgo_2019_kranj} for the use of an evolutionary algorithm, and to \cite{PUTEK2024113125}, addressing a stochastic Maxwell's eigenvalue problem.

In this paper, we consider a free-form eigenvalue optimization problem with respect to a mixed
variational formulation of Maxwell's eigenvalue problem by~\cite{Kikuchi1987MixedAP}, i.e., for a certain eigenvalue $\lambda$ of the Maxwell operator
\begin{align*}
	\nabla \times ( \nabla \times u) &= \lambda u && \text{in } \Omega, \\
	\nabla \cdot u &= 0 && \text{in } \Omega, \\
	\nu \times u &= 0 &&\text{on } \partial \Omega,\\
	\Vert u \Vert^2 &= 1, 
\end{align*}
which governs the field distribution of the eigenmodes and their associated frequency.
In this setup the eigenvalue, and corresponding eigenfunctions $u$, are to be optimized by
choice of the physical domain $\Omega$. 
The results and formulation have been announced in~\cite{cherter_PAMM}.
As detailed in, e.g.,~\cite{Boffi2010FiniteEA}, this mixed formulation prohibits spurious
eigenvalues. To allow for a larger variety in admissible domains,  
we consider the method of mappings allowing arbitrary, sufficiently regular and injective,
deformations of a reference domain $\widehat{\Omega} \subset \mathbb{R}^d$, $d\in \{2,3\}$.
For the method of mappings is based on kinematic statements from continuum mechanics, where
the actual physical domain $\Omega_q$ is then given by a \mbox{deformation
$q\colon \widehat{\Omega} \rightarrow \mathbb{R}^d$} by defining $\Omega_q = F(\widehat{\Omega}) = (I+q)\widehat{\Omega}$. 
For details, we refer to, e.g., \cite{murat_simon_1976_etude_problems_optimal_design, Murat1976SurLC}.

The method of mappings is already applied to several shape optimization problems to derive Fréchet derivatives with respect to shape-variations. For example, we refer to  \cite{Fischer2017Frechet}, where properties of Fréchet differentiability are derived in the context of Navier-Stokes problems.  
Due to the, infinite, dimension of the control space standard sensitivity based approaches for
derivatives of the eigenvalues, e.g.,~\cite{Lancaster:1964,AndrewChuLancaster:1993} are no longer
applicable. Hence, as announced in~\cite{cherter_PAMM}, we utilize the adjoint approach to
calculate eigenvalue derivatives.  
The origin of the adjoint approach is explained in~\cite{Murat1997}, and perspective of Lagrange multiplier is introduced in~\cite{cea_1986_calcul}.
For the adjoint approach in context of optimization problems governed by partial differential equations, we refer to, e.g.,~\cite{Troeltzsch:2010,optPDE2009UlbrichHinze}.
Moreover, the adjoint method has already been applied to many applications of control problems as well as shape and topology optimization problems, e.g., in the domain of structural optimization of solid bodies, see \cite{ALLAIRE_2004_structureoptimization, ALLAIRE_2008_stressoptdesign, ALLAIRE_2011_damage, ALLAIRE_2014_shapeopt}, or in fluid mechanics, see, e.g., \cite{Heners2018259}. Further, optimization problems where the functionals depend on the eigenvalues are studied in, e.g., \cite{Kikuchi1995} and discussed in \cite{Opt_ev_adjoint_ToaderBarbarosie+2017+142+158}.
The particular formulation of the eigenvalue problem and its adjoint are inspired
by~\cite{heuveline_rannacher_a_posteriori_error_control_evp,RannacherWestenbergerWollner+2010_adaptive_fem_evp},
where this approach is applied to a posteriori error estimation for elliptic eigenvalue problems.
For more literature for the computation of shape derivatives in context of PDE constrained optimization, we refer to \cite{delfour2011shapes,Sokolowski1992shape,haslinge2003shapeintro}.

In order the concerning optimization problem, we consider a damped inverse BFGS method for infinite dimensional problems. 
The damping is based on an
idea of~\cite{Powell:1978} for the standard BFGS method, see
also~\cite{Numerical_opt_NoceWrig06}. Our implementation of the damped inverse BFGS method is
already used in~\cite{BezbaruahMaierWollner:2023}.

This paper is structured as follows. In \Cref{sec:optproblem}, we model the optimization problem of Maxwell's eigenvalue problem. Therefore, we consider the method of mappings in \Cref{subsec::mapping}. Based on this method, we derive the shape optimization problem depending on domain variations in \Cref{subsec:shapeopt}.
Afterwards, we discuss the adjoint calculus for a generalized eigenvalue optimization problem in \Cref{sec:adjointCalc_generalProblem}, where we consider an adjoint representation for the derivatives in \Cref{subsec:reducedderivative} and derive the formulas for Maxwell's eigenvalue optimization problem with domain transformations in \Cref{subsec:adjointCalc_MaxwellProblem}.	
In Section~\ref{sec:DampedInverseBFGS}, we discuss the optimization method used for the
solution of the Maxwell eigenvalue optimization problem. In particular, we will introduce a
damped inverse BFGS method. Here, the damping is utilized to allow the use of an
Armijo backtracking linesearch within the optimization while still guaranteeing
positive definiteness of the approximate inverse Hessian. 
Finally, in Section~\ref{sec:examples}, we provide the details of the finite element discretization using a mixed finite element method utilizing Lagrange and Nédélec elements to obtain
conforming subspaces of \(H_0^1(\widehat{\Omega})\) and \(H_0(\operatorname{curl}; \widehat{\Omega})\), see, e.g., Monk \cite{monk2003finite}. Further, several numerical examples show the correctness and
efficiency of our implementation.

\section{Modeling the Optimization Problem of Maxwell's Eigenvalue Problem}\label{sec:optproblem}
\subsection{Mapping}
\label{subsec::mapping}
Let \(d = 2,3 \), we denote by \(\widehat{\Omega} \subset \mathbb{R}^d \)
the, simply connected, reference domain. To map this domain to the physical domain
\(\Omega_q \subset \mathbb{R}^d\) on which the Maxwell eigenvalue problem is to be posed, 
we define a deformation depending on a displacement $q$ which we consider as our
control variable. 
The transformation of a point \(\widehat{x}\subset \widehat{\Omega} \) to a corresponding point \(x \subset \Omega_q\) is defined by 
\begin{equation*}
	x = \mathrm{F}_{q}(\widehat{x}) = q(\widehat{x}) + \widehat{x}.
\end{equation*}
To ensure invertibility of this mapping, as well as invertibility of induced maps between
function spaces on $\widehat{\Omega}$ and $\Omega_q$, we require that
\(q \in Q^{\rm{ad}} \subset Q := H^1(\widehat{\Omega};\mathbb{R}^d)\).
More precisely, defining the deformation gradient
\begin{equation}
\mathrm{DF}_q = \mathrm{I} + \nabla q, \label{deformationgradient}
\end{equation}
where $\mathrm{I}$ denotes the $d\times d$ identity matrix, and $\mathrm{J}_q = \det(\mathrm{DF}_q)$ the corresponding
determinant, we require that for \mbox{each $q \in Q^{\rm{ad}}$} it holds
\[
\mathrm{DF}_q \in L^\infty(\widehat{\Omega};\mathbb{R}^{d\times d}), \qquad \mathrm{J}_q > 0.
\]
Based on these mappings, we need to define suitable transformations to assert the needed function
spaces on $\widehat{\Omega}$ and $\Omega$ are isomorphic.
Here, we need to consider the function spaces
\begin{align*}
	H_0(\operatorname{curl}; \Omega) &= \left\{v\in L^2(\Omega)^d : \curl  v \in L^2 (\Omega)^n; \;v\times \nu = 0 \text{ on } \partial \Omega \right\},\\
	H^1_0 (\Omega) &= \{v \in H^1(\Omega) : v = 0 \text{ on }\partial \Omega \},
\end{align*}
where \(n = 1\) for \(d=2\) and \(n =3 \) for  \(d = 3\) and $\nu$ denotes the outward unit normal on $\partial \Omega$.

To this end, see, e.g.,~\cite{Boffi2010FiniteEA},
we transform a scalar function \(\widehat{p}\in H^1_0(\widehat{\Omega})\) to a scalar function \(p = \mathcal{H}_q(\widehat{p}) \in H^1_0(\Omega_q)\) by
\begin{equation*}
	p \circ  \mathrm{F}_{q}  = \widehat{p} \qquad\text{on }\widehat{\Omega}.
\end{equation*} 
Over and above that,  $H^1_0(\widehat{\Omega})$ and $H^1_0(\Omega_q)$ are isomorphic and for any such pair \(\widehat{p} \in H^1_0(\widehat{\Omega})\) and \({p}\in H^1_0({\Omega}_q)\), it holds 
\begin{equation*}
	\nabla p  \circ \mathrm{F}_q = \mathrm{DF}_{q}^{-T} \widehat{\nabla} \widehat{p} \qquad\text{on }\widehat{\Omega}
\end{equation*}
where $\widehat{\nabla}$ and $\nabla$ denote the gradient with respect to the reference
coordinates $\widehat{x}$ and the physical coordinates $x$.

For \( H_0(\widehat{\operatorname{curl}}; \widehat{\Omega)}\) we need to apply the covariant Piola mapping. 
Therefore, for \(\widehat{u}\in H_0(\widehat{\operatorname{curl}}; \widehat{\Omega})\)
we associate a function \(u=\mathcal{G}_q(\widehat{u}) \in H_0(\operatorname{curl};\Omega_q)\) by the mapping 
\begin{equation*}
	u \circ \mathrm{F}_{q} = \mathrm{DF}_{q}^{-T} \widehat{u}\qquad\text{on }\widehat{\Omega}.
\end{equation*}
Then for \(u \in H_0(\operatorname{curl};\Omega_q)\) and corresponding \(\widehat{u}\in H_0(\widehat{\operatorname{curl}}; \widehat{\Omega})\), it holds that in the three-dimensional case
\begin{equation*}
	\curl  u\circ \mathrm{F}_q = \frac{1}{\mathrm{J}_q} \mathrm{DF}_q \hcurl \widehat{u} \qquad\text{on }\widehat{\Omega}.
\end{equation*}
In the two-dimensional case it holds that
\begin{equation*}
	\curl  u \circ \mathrm{F}_q = \frac{1}{\mathrm{J}_q} \hcurl\widehat{u} \qquad\text{on }\widehat{\Omega}.
\end{equation*}
In addition, the local volume change between the two domains satisfies
\begin{equation*}
	\rm{d}x = \mathrm{J}_q\,\rm{d}\widehat{x}.
\end{equation*}
For further details we refer to, e.g.,~\cite{monk2003finite,HigherOrderNumMeth}. 

\subsection{Shape Optimization Problem}
\label{subsec:shapeopt}
Based on the domain mapping introduced in Section~\ref{subsec::mapping}.
We define the following shape optimization problem of Maxwell's eigenvalue problem.  
We look for the solution of the problem 
\begin{equation}\left.
	\begin{aligned}
		\min	\; &J(q,\lambda(q))\\
		\text{s.t. }
		\nabla \times ( \nabla \times u) &= \lambda u && \text{in } \Omega_q \\
		\nabla \cdot u &= 0 && \text{in } \Omega_q \\
		\nu \times u &= 0 &&\text{on } \partial \Omega_q\\
		\Vert u \Vert_{\Omega_q}^2 &= 1,
	\end{aligned}
	\right.
	\label{EVP}
\end{equation}
where a particular eigenvalue, e.g., smallest, is selected from the possible solutions of~\eqref{EVP}.
Here, and following, \(\Vert \cdot \Vert \) and $(\cdot,\cdot)$ are the usual \(L^2\) norm and scalar product on $\widehat{\Omega}$ while an index $\Omega_q$ refers to the respective quantity 
on $\Omega_q$. Furthermore, \(\nu\) is the outer unit normal vector to the boundary \(\partial \Omega_q\).
The generalized eigenvalue problem of~\eqref{EVP} is reformulated in weak form using the 
variational formulation by Kikuchi \cite{Kikuchi1987MixedAP}, also seen in \cite{Boffi2010FiniteEA}. It is to find an\mbox{ eigenvalue \(\lambda \in \mathbb{R} \)} and corresponding eigenvector \(0 \neq u \in H_0(\operatorname{curl}; \Omega_q) \) such that, for some \(\psi \in H_0^1(\Omega_q) \), 
\begin{equation}
	\left.\begin{aligned}
		(\curl u, \curl v )_{\Omega_q} + (\nabla \psi, v)_{\Omega_q}  &= \lambda  (u,v)_{\Omega_q}  &&\forall \; v \in H_0(\operatorname{curl}; \Omega_q) \\
		(u, \nabla \varphi)_{\Omega_q}  &= 0 &&\forall \; \varphi \in H_0^1(\Omega_q)
	\end{aligned}
	\right.
	\label{mixedFormulationKikuchi}
\end{equation}

Considering now~\eqref{mixedFormulationKikuchi} on the reference domain \(\widehat{\Omega}\)
and applying the domain mapping introduced in Section~\ref{subsec::mapping}, we derive the
following transformed equations on the reference domain.
Let $u,v \in H_0(\operatorname{curl};\Omega_q)$, $\varphi \in H^1_0(\Omega_q)$ and corresponding $\widehat{u},\widehat{v} \in H_0(\widehat{curl};\widehat{\Omega})$, $\widehat{\varphi} \in H^1_0(\widehat{\Omega})$ be arbitrary.
In case of \(d=3\) the curl-equation is given by
\begin{align*}
	a(q;\widehat{u},\widehat{v}) &\overset{d = 3}{\coloneqq} \int_{\Omega_q}	\left(\curl u, \curl v\right)\rm{d} x &&=  \int_{\widehat{\Omega}}\frac{1}{\rm{J}_q} \,\left(\mathrm{DF}_{q}\,\curl \widehat{u},\mathrm{DF}_{q}\,\curl \widehat{v}\right)\rm{d} \widehat{x}\\
	\intertext{while for \(d=2\), it is}
	a(q;\widehat{u},\widehat{v}) &\overset{ d = 2}{\coloneqq} \int_{\Omega_q}\left(\curl u, \curl v\right)\rm{d} x &&= \int_{\widehat{\Omega}} \frac{1}{\mathrm{J}_q} \left( \curl \widehat{u},\curl \widehat{v}\right)\rm{d} \widehat{x}.
	\intertext{Independent of \(d=2,3\), the remaining terms are transformed as follows.}
	b(q;\widehat{u},\widehat{\varphi}) &\coloneqq 
	\int_{\Omega_q}\left(u, \nabla \varphi\right) \rm{d} x &&= \int_{\widehat{\Omega}}\mathrm{J}_q \left(\mathrm{DF}_{q}^{-T}\,\widehat{u}, \mathrm{DF}_{q}^{-T}\,\nabla \widehat{\varphi}\right)\rm{d} \widehat{x},\\ 
	m(q;\widehat{u},\widehat{v}) &\coloneqq\int_{\Omega_q}\left(u,v\right) \rm{d} x &&= \int_{\widehat{\Omega}} \mathrm{J}_q\left(  \mathrm{DF}_{q}^{-T}\,\widehat{u},\mathrm{DF}_{q}^{-T}\,\widehat{v}\right)\rm{d} \widehat{x}.
\end{align*}

Hence the weak formulation of the eigenvalue problem becomes find
\[(\lambda, u, \psi) \in \mathbb{R}\times H_0(\widehat{\operatorname{curl}};\widehat{\Omega}) \times H^1_0(\widehat{\Omega})\]
such that 
\begin{equation*}
	\begin{aligned}
		a(q;u,v) + b(q;v,\psi) &= \lambda m(q;u,v) \qquad &&\forall v \in H_0(\widehat{\operatorname{curl}};\widehat{\Omega}),\\
		b(q;\varphi,u) &= 0 \qquad &&\forall \varphi \in H^1_0(\widehat{\Omega}).
	\end{aligned}
\end{equation*}
We introduce the form \(k\colon Q^{\rm{ad}}\times (H_0(\widehat{\operatorname{curl}};\widehat{\Omega}) \times H^1_0(\widehat{\Omega}))^2 \rightarrow \mathbb{R}\) for notational convenience, by
\[
k(q;(u,\psi),(v,\varphi)) := a(q;u,v) + b(v,\psi) + b(u,\varphi).
\]
The variational form~\eqref{mixedFormulationKikuchi} can be written as find
$(\lambda, u, \psi) \in \mathbb{R}\times H_0(\widehat{\operatorname{curl}};\widehat{\Omega}) \times H^1_0(\widehat{\Omega})$ solving
\begin{equation}\label{eq:generalized_evp}
	k(q;(u,\psi),(v,\varphi)) = \lambda\, m(q;u,v)\quad \forall (v,\varphi)\in H_0(\widehat{\operatorname{curl}};\widehat{\Omega}) \times H^1_0(\widehat{\Omega}).  
\end{equation}
We obtain the discretized formulation of \eqref{eq:generalized_evp} straight forward by utilization of the Galerkin method. 
\section{Adjoint Calculus for a Generalized Eigenvalue Optimization Problem}
\label{sec:adjointCalc_generalProblem}
\subsection{An Adjoint Representation for the Derivatives}
\label{subsec:reducedderivative}
We consider a general eigenvalue optimization problem
\begin{equation}
	\begin{aligned}
		&\min_{q,\lambda,u} \qquad \qquad J(q,\lambda(q))\\
		&\text{s.t.}\left\{
		\begin{aligned}
			k(q;\bu,\bv)&= \lambda(q) \, m(q;\bu,\bv) \qquad \forall \; \bv \in \bU\\
			m(q;\bu,\bu) &= 1,\\
			\bu &\in \bU,\\
			\lambda &\in \mathbb{R},\\
			q &\in Q^{\rm{ad}}.
		\end{aligned}
		\right.
	\end{aligned}
	\label{GeneralEigenvalueProblem}
\end{equation}
Here \(Q^{\rm{ad}} \) is the set of admissible controls and \(\bU\) is the space, where we search for eigenfunctions.
In the example considered in Section~\ref{subsec:shapeopt}, it is $\bU = H_0(\widehat{\text{curl};0},\widehat{\Omega})\times H^1_0(\widehat{\Omega})$ and functions can be decomposed as $\bu = (u,\psi)$, $\bv=(v,\varphi)$.
Further, we assume the cost functional \(J:Q \times \mathbb{R} \to \mathbb{R}\) to be continuously differentiable on $Q^{\rm{ad}} \times \mathbb{R}$. The forms 
\(k,m:Q\times \bU \times \bU \to \mathbb{R}\) are linear in the last two arguments. Moreover, $m$ is assumed to be symmetric with respect to the last two arguments. Further,
they are both differentiable on $Q^{\rm{ad}}\times \bU\times \bU$. Here, differentiability on
$Q^{\rm{ad}}$ is to be understood in terms of sufficiently regular perturbations
$p \in \widetilde{Q}$ such that smallness of $p$ implies that the \mbox{forms $k,m$}
remain well-defined.

For each \(q \in Q^{\rm{ad}}\), we assume that the eigenvalue problem
\(k(q,u)= \lambda(q) \, m(q,u)\) admits, a sequence \mbox{ $\lambda_0(q)\le \lambda_1(q) \le \ldots$}
of real eigenvalues, which holds true for the considered Maxwell eigenvalue problem, see, e.g., \cite[Chapter 4.7]{monk2003finite}.
To apply adjoint calculus, we assume that, after normalization, the selected eigenvalue and
eigenfunction pair $(\lambda,\bu)$ to be locally unique; and thus in particular the eigenvalue
to be simple. 
Moreover, we assume a gap of real eigenvalues between the chosen eigenvalue \(\lambda_i(q)\) and the others, i.e., without crossing, which implies that there \mbox{exists \(\rho > 0\),} independent of \(q\), such that
\begin{equation*}
	\vert \lambda_i(q) - \lambda_j(q) \vert \geq \rho \qquad \text{ for all } j \neq i.
\end{equation*}
The selection rule, e.g., $\lambda(q) = \lambda_0(q)$, is understood implicitly defining
a solution operator $S \colon Q^{\rm{ad}} \rightarrow \mathbb{R}\times \bU$ by defining 
$q \mapsto S(q) = (\lambda(q),\bu(q)) = (\lambda,\bu)$
to be the particularly chosen eigenvalue-eigenfunction pair. 

With the help of the solution operator, we can define the reduced problem
\begin{equation}
	\min_{q\in Q^{\rm{ad}}} j(q) := J(q,\lambda(q)).
	\label{reducedEigenvalueProblem}
\end{equation}
We can now utilize standard Lagrange techniques, see, e.g.,~\cite{Troeltzsch:2010,BeckerMeidnerVexler:2007},
to obtain a representation of the \mbox{derivative $j'(q)\in Q^*$} and the $Q$-gradient
$\nabla_Q j(q) \in Q$.
To this end, we introduce the Lagrangian
\begin{equation*}
	\mathcal{L} : Q^{\rm{ad}} \times (\mathbb{R} \times \bU) \times (\mathbb{R} \times \bU)
\end{equation*}
of problem \eqref{reducedEigenvalueProblem} as
\begin{align}
	\label{lagrangian_general_evp}
	\mathcal{L}(q,(\lambda, \bu), (\mu, \bz)) = J(q,\lambda) - k(q;\bu,\bz)+ \lambda \, m(q;\bu,\bz) +
	\mu \left( m(q;\bu,\bu)-1\right)
\end{align}
where \(\bz \in \bU\) will be the adjoint state and $\mu$ is the multiplier for the
normalization of the eigenfunction condition.

Then as usual, we can recover the state equation (eigenvalue/eigenfunction) from
the condition
\begin{align*}
	0 &= \mathcal{L}_{(\mu,\bz)}'(q,(\lambda, \bu), (\mu, \bz))(\tau,\bv) & \forall (\tau,\bv) &\in \mathbb{R}\times \bU\\
	&= -k(q;\bu,\bv) + \lambda\,m(q;\bu,\bv)\\
	&\quad+ \tau (m(q;\bu,\bu)-1)\\
	\intertext{	using linearity of $k,m$ in the last argument.
		The adjoint state is then defined by}
	0 &= \mathcal{L}_{(\lambda,\bu)}'(q,(\lambda, \bu), (\mu, \bz))(\tau,\bv) & \forall (\tau,\bv) &\in \mathbb{R}\times \bU\\
	&= J_{\lambda}'(q,\lambda)\tau+ \tau\,m(q;\bu,\bz)\\
	& \quad -k(q;\bv,\bz) + \lambda\,m(q;\bv,\bz) + 2\mu m(q;\bu,\bv),
\end{align*}
where \(\tau\) is the test function of the eigenvalue and \(\bv\) is the test function of the eigenfunction.
using linearity of $k,m$ in the last two arguments and symmetry of $m$.
Setting $\mu=0$ and considering variations in $\bv$, we see that $\bz$ solves the adjoint eigenvalue problem
\[
k(q;\bv,\bz) =  \lambda\,m(q;\bv,\bz)\qquad\forall \bv\in \bU.
\]
Variations in $\tau$ yield a normalization of the adjoint eigenfunction
\[
m(q;\bu,\bz) = - J_{\lambda}'(q,\lambda).
\]
We obtain the derivative of the reduced cost functional with the thus computed values for $(\lambda,\bu,\mu,\bz)$
\begin{equation*}
	j'(q)p = {\mathcal{L}}_q' (q,({\lambda},u),(\mu, z))p \qquad \forall p\in \widetilde{Q}.
\end{equation*}
Assuming $\widetilde{Q} \subset Q$ to be dense and $j'(q)$ to be extendable to a functional
on $Q$, and not just on $\widetilde{Q}$, we can compute the $Q$-gradient $\nabla_Q j(q)$
by inverting the Riesz isomorphism, i.e., solving
\begin{equation*}
	(\nabla_Q j(q),p)_Q = j'(q)p\qquad\forall p\in Q.
\end{equation*}


\subsection{Derivative Formulas for the Domain Transformation}
\label{subsec:adjointCalc_MaxwellProblem}
In the previous Section~\ref{subsec:reducedderivative},
we derived an adjoint representation for the reduced gradient of a generalized eigenvalue optimization problem.
For the shape optimization problem of Maxwell's eigenvalue problem \eqref{EVP}, we consider the following cost functional
\begin{equation*}
	J(q, \lambda) = \frac{1}{2}\vert \lambda- \lambda_{*}\vert^2 + \frac{\alpha}{2}\left(\Vert q \Vert^2  + \Vert \nabla q  \Vert^2 \right)
	-\beta \int_{\widehat{\Omega}}\ln (\mathrm{J}_{q}-\varepsilon)\,\mathrm{d}\widehat{x},
	\label{cost}
\end{equation*}
where \(\lambda_{*} \in \mathbb{R}\) is a target eigenvalue and \(\alpha, \beta \in \mathbb{R}\)
are given parameters adding some regularization terms to the objective functional.
We apply a quadratic penalty regularization using the \(H^1\)-norm to guarantee the existence of \(q\) and \(\nabla q\). In addition, the deformation gradient \(\mathrm{DF}_q\), defined in~\eqref{deformationgradient}, needs to be invertible and its determinant to be non-negative. To ensure this, we apply a barrier term  enforcing \(\mathrm{J}_q = \det(\mathrm{DF}_q)\geq \varepsilon\) for some \(\varepsilon>0\).

Here, the state and adjoint needed simply coincide with the constraining eigenvalue problem for $(\lambda,\bu)$
and the corresponding adjoint eigenvalue problem for $\bz$ with a particular normalization, i.e.,
in the setting of Section~\ref{subsec:shapeopt}, the state variables are 
\begin{equation*}
	(\lambda,\bu) = (\lambda, (u,\psi)) \in \mathbb{R}\times (H_0(\widehat{\operatorname{curl}}; \widehat{\Omega})\times H^1_0(\widehat{\Omega})).  
\end{equation*}
solving
\begin{equation}\label{eq:state}
	\begin{aligned}
		a(q;u,v) + b(q;v,\psi) & = \lambda \,m(q;u,v) & \forall v &\in H_0(\widehat{\operatorname{curl}}; \widehat{\Omega}),\\
		b(q;u,\varphi) \qquad \qquad\quad &= 0 & \forall \varphi & \in H^1_0(\widehat{\Omega}),\\
		m(q;u,u) &= 1,
	\end{aligned}
\end{equation}
while the adjoint is 
\begin{equation*}
	\bz = (z,\chi)) \in H_0(\widehat{\operatorname{curl}}; \widehat{\Omega})\times H^1_0(\widehat{\Omega})
\end{equation*}
solving
\begin{equation}\label{eq:adjoint}
	\begin{aligned}
		a(q;v,z) + b(q;v,\chi) & = \lambda \,m(q;v,z) & \forall v &\in H_0(\widehat{\operatorname{curl}}; \widehat{\Omega}),\\
		b(q;z,\varphi) \qquad \qquad\quad &= 0 & \forall \varphi & \in H^1_0(\widehat{\Omega}),\\
		2 m(q;u,z) &= \lambda-\lambda_*,
	\end{aligned}
\end{equation}
where we had defined 
\begin{align*}
	a(q; u, v) &= (\curl  \mathcal{G}_q(u), \curl  \mathcal{G}_q(v))_{\Omega_q},\\
	b(q;u,\varphi) &= (\mathcal{G}_q(u),\nabla \mathcal{H}_q(\varphi))_{\Omega_q},\\
	m(q, u,v) &= (\mathcal{G}_q(u),\mathcal{G}_q(v))_{\Omega_q},
\end{align*}
where $\mathcal{H}_q,\mathcal{G}_q$ are the isomorphisms between reference and physical function spaces as defined in Section~\ref{subsec::mapping}.

More tedious is the calculation of the derivatives with respect to the domain transformation.
In view of the integral transformations at the end of Section~\ref{subsec:shapeopt} it is clear
that the, directional, derivatives of the deformation gradient $\mathrm{DF}_q$, its inverse $\mathrm{DF}_q^{-T}$,
and its determinant $\mathrm{J}_q= \det(\mathrm{J}_q)$ are needed, i.e., for $q \in Q^{\mathrm{ad}}$ and direction $p$, one needs
\begin{align*}
	\mathrm{DF}_q'd = \nabla d,\quad
	(\mathrm{F}_q^{-T}) d, \quad \mathrm{J}_q'd.
\end{align*}
As previously shown, the derivative of the reduced cost functional is given as
\begin{equation}\label{eq:gradient}
	\begin{aligned}
		j'(q) &= {\mathcal{L}}_q' (q,({\lambda},(u,\psi)),(0, (z,\chi)))p\\
		&= J'_q(q) p  - a'_q(q;u,z)p - b'_q(q;z,\psi)p - b'_q(q;u,\chi)p
		+ \lambda m'_q(q;u,z)p
	\end{aligned}
\end{equation}
already inserting $\mu = 0$.
The first term is 
\begin{align*}
	J'_q(q,\lambda) p  &= \alpha\left((q,p) + (\nabla q, \nabla p)\right) - \beta \int_{\widehat{\Omega}} \frac{1}{\mathrm{J}_q}(\mathrm{J}_q)'p\, \mathrm{d}\widehat{x},\\
	\intertext{	For the remaining terms, the derivatives follow from the respective formulas in Section~\ref{subsec:shapeopt}, i.e., for $d=2$}
	a'_q(q;u,z)p &\overset{d=2}{=} \int_{\widehat{\Omega}} \frac{-1}{\mathrm{J}_q^2}\mathrm{J}_q'p \left( \curl u,\curl z\right)\rm{d} \widehat{x},
	\intertext{	while for $d=3$}
	a'_q(q;u,z)p &\overset{d=3}{=} \int_{\widehat{\Omega}}\frac{-1}{\mathrm{J}_q^2}\mathrm{J}_q'p \,\left(\mathrm{DF}_{q}\,\curl u,\mathrm{DF}_{q}\,\curl z\right)\rm{d} \widehat{x}\\
	&+\int_{\widehat{\Omega}}\frac{1}{\mathrm{J}_q} \,\left(\mathrm{DF}_{q}'p\,\curl u,\mathrm{DF}_{q}\,\curl z\right)\rm{d} \widehat{x}\\
	&+\int_{\widehat{\Omega}}\frac{1}{\mathrm{J}_q} \,\left(\mathrm{DF}_{q}\,\curl u,\mathrm{DF}_{q}'p\,\curl z\right)\rm{d} \widehat{x}.
	\intertext{The remaining terms are independent of the dimension}
	b'_q(q;z,\psi) &= \int_{\widehat{\Omega}}\mathrm{J}_q'p \left(\mathrm{DF}_{q}^{-T}\,z, \mathrm{DF}_{q}^{-T}\,\nabla \psi\right)\rm{d} \widehat{x}\\
	&+\int_{\widehat{\Omega}}\mathrm{J}_q \left( (\mathrm{DF}_{q}^{-T})'p\,z, \mathrm{DF}_{q}^{-T}\,\nabla \psi\right)\rm{d} \widehat{x}\\
	&+\int_{\widehat{\Omega}}\mathrm{J}_q \left(\mathrm{DF}_{q}^{-T}\,z, (\mathrm{DF}_{q}^{-T})p\,\nabla \psi\right)\rm{d} \widehat{x},\\
	m'_q(q,u,z)p
	&=\int_{\widehat{\Omega}} \mathrm{J}_q'p\left(\mathrm{DF}_{q}^{-T}\,u,\mathrm{DF}_{q}^{-T}\,z\right)\rm{d} \widehat{x}\\
	&+\int_{\widehat{\Omega}} \mathrm{J}_q\left( (\mathrm{DF}_{q}^{-T})'p\,u,\mathrm{DF}_{q}^{-T}\,z\right)\rm{d} \widehat{x}\\
	&+\int_{\widehat{\Omega}} \mathrm{J}_q\left(\mathrm{DF}_{q}^{-T}\,u,(\mathrm{DF}_{q}^{-T})'p\,z\right)\rm{d} \widehat{x}.
\end{align*}

\section{A Damped Inverse BFGS Method} \label{sec:DampedInverseBFGS}
In order to solve the reduced optimization problem~\eqref{reducedEigenvalueProblem},
we consider a damped inverse Broyden-Fletcher-Goldfarb-Shanno (BFGS) method.
The damping was introduced for an update for the Hessian approximation by~\cite{Powell:1978},
here we show that an analogous approach can be utilized for the approximation of the inverse
Hessian. 
The resulting damped inverse BFGS method is given in Algorithm~\ref{BFGSInverseAlgorithm},
where it is assumed that the problem is such that a $Q$-gradient of the reduced objective
can be computed. 

\begin{remark}
	Note, that to assert global convergence, a check of the angle between $d^k$
	and $\nabla_Q j(q^k)$ could be added in the algorithm. However, in our numerical tests
	convergence to near stationarity was achieved without such safeguards.
\end{remark}

\begin{remark}
	In the implementation of Algorithm~\ref{BFGSInverseAlgorithm} the operator $B_k$ is never
	stored but the action $B_k \nabla_Q j(q^k)$ is evaluated using the recursive definition
	of $B_k$ given by the update formula. To this end, the functions $\widetilde{d}, B_ky^k \in Q$
	and the \mbox{scalars $(\widetilde{d}^k,y^k)_Q$} and $(\widetilde{d}^k-B_ky^k,y^k)$ are stored.
	To avoid unwanted increase in memory usage, the implementation allows to specify the
	number $m$ of previous iterates to be kept. The recursive definition then uses the definition
	$B_{k-m} = B_0$. To ensure compactness of \(B_0-\nabla^2j(q)\), we choose by default  $B_0 = 1/\alpha I$, where $I$ denotes
	the identity on $Q$. Hence, the BFGS method guarantees fast convergence, see \cite[Theorem~5.2]{Griewank:1987}, \cite[Theorem~2.5]{KelleySachs:1991}.
	
\end{remark}

In the following lemma, we show that the damping step ensures that positive definiteness of $B_k$ is sufficient to assert positive definiteness of \(B_{k+1}\).
\begin{lemma}[]
	Let  \(\widetilde{d}^k,y^k\) be given and assume that \(B_0\in \mathcal{L}(Q,Q)\) is a
	symmetric, positive definite operator. Then the damping step in
	Algorithm~\ref{BFGSInverseAlgorithm} ensures that \(\langle y^k, \widetilde{d}^k \rangle > 0\). By induction \(B_{k}\), and thus \(B_{k+1}\), remains positive definite.
\end{lemma}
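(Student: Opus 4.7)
The plan is to argue the two claims of the lemma sequentially: first that the damping enforces the curvature condition $(y^k,\widetilde{d}^k)_Q>0$, and then that this condition, together with symmetric positive definiteness of $B_k$, is inherited by $B_{k+1}$ under the inverse BFGS update.

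For the first part, I would invoke the defining property of the damping step in Algorithm~\ref{BFGSInverseAlgorithm}: in the spirit of Powell's original construction, $\widetilde{d}^k$ is formed as a convex combination $\widetilde{d}^k=\theta_k d^k+(1-\theta_k)B_ky^k$ with $\theta_k\in(0,1]$ chosen so that $(y^k,\widetilde{d}^k)_Q\ge \sigma (y^k,B_ky^k)_Q$ for a prescribed $\sigma\in(0,1)$. By the inductive hypothesis $B_k$ is symmetric positive definite, so $(y^k,B_ky^k)_Q>0$ whenever $y^k\neq 0$; hence $(y^k,\widetilde{d}^k)_Q>0$. The degenerate case $y^k=0$ is excluded because in that case the algorithm skips the update and retains $B_{k+1}=B_k$, preserving positive definiteness trivially.

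For the second part, I would use the standard product form of the inverse BFGS update,
\begin{equation*}
B_{k+1}=\left(I-\frac{\widetilde{d}^k\otimes y^k}{(y^k,\widetilde{d}^k)_Q}\right)B_k\left(I-\frac{y^k\otimes \widetilde{d}^k}{(y^k,\widetilde{d}^k)_Q}\right)+\frac{\widetilde{d}^k\otimes \widetilde{d}^k}{(y^k,\widetilde{d}^k)_Q},
\end{equation*}
where $u\otimes w$ denotes the rank-one operator $v\mapsto (w,v)_Q u$. For arbitrary $v\in Q$ with $v\neq 0$, setting $w=v-\frac{(\widetilde{d}^k,v)_Q}{(y^k,\widetilde{d}^k)_Q}y^k$ yields
\begin{equation*}
(v,B_{k+1}v)_Q=(w,B_kw)_Q+\frac{((\widetilde{d}^k,v)_Q)^2}{(y^k,\widetilde{d}^k)_Q}.
\end{equation*}
Both summands are nonnegative: the first by positive definiteness of $B_k$, the second because the denominator is positive by the first part. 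I then verify that they cannot vanish simultaneously: if $(\widetilde{d}^k,v)_Q=0$ then $w=v\neq 0$, forcing $(w,B_kw)_Q>0$; otherwise the second summand itself is strictly positive. Symmetry of $B_{k+1}$ is immediate from the update formula, completing the induction with base case $B_0$ assumed symmetric positive definite.

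The main obstacle is not the BFGS algebra, which is classical once the curvature condition is established, but the first step of verifying that the specific damping prescription in Algorithm~\ref{BFGSInverseAlgorithm} actually delivers $(y^k,\widetilde{d}^k)_Q>0$ uniformly. This requires a careful look at how $\theta_k$ is chosen and at the exceptional cases (e.g., $y^k$ small or the unconstrained direction $d^k$ already satisfying the curvature condition), in which case $\theta_k=1$ and $\widetilde{d}^k=d^k$; the argument must cover all branches uniformly so that the induction closes.
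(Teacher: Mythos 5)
Your argument is correct in substance but takes a genuinely different route from the paper for the positive-definiteness part. The paper works directly with the sum form of the update as written in Algorithm~\ref{BFGSInverseAlgorithm}: it expands $(p,B_{k+1}p)_Q$, completes a square, and bounds the one possibly negative term via the Cauchy--Schwarz inequality applied to $B_k^{1/2}$, with a separate case distinction (linear independence of $B_k^{1/2}y^k$ and $B_k^{1/2}\widetilde{d}^k$ versus proportionality) to get strict positivity. You instead pass to the factored form $B_{k+1}=V^{*}B_kV+\frac{\widetilde{d}^k\otimes\widetilde{d}^k}{(y^k,\widetilde{d}^k)_Q}$ with $V=I-\frac{y^k\otimes\widetilde{d}^k}{(y^k,\widetilde{d}^k)_Q}$, which collapses the whole estimate to $(v,B_{k+1}v)_Q=(w,B_kw)_Q+(\widetilde{d}^k,v)_Q^2/(y^k,\widetilde{d}^k)_Q$ with $w=Vv$, and the strict-positivity dichotomy ($(\widetilde{d}^k,v)_Q=0$ forces $w=v\neq0$) is immediate. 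This is shorter, avoids invoking the operator square root, and makes symmetry obvious; its only cost is that you should verify (one short computation) that the factored form really coincides with the sum-form update stated in the algorithm --- it does.

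The one place where your write-up falls short of a proof is the curvature condition itself, which is half of the lemma's assertion. You state that $\theta_k$ is ``chosen so that'' $(y^k,\widetilde{d}^k)_Q\ge\sigma(y^k,B_ky^k)_Q$ and defer the verification to a ``careful look''; the paper actually carries out this computation. In the undamped branch the inequality is the branch condition, but in the damped branch one must substitute $\theta_k=(1-\xi)\frac{(y^k,B_ky^k)_Q}{(y^k,B_ky^k)_Q-(y^k,\widetilde{d}^k)_Q}$ into $(y^k,\theta_k\widetilde{d}^k+(1-\theta_k)B_ky^k)_Q$ and simplify to obtain exactly $\xi\,(y^k,B_ky^k)_Q>0$ (noting that the denominator is positive precisely because that branch is entered). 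This is a two-line calculation, but without it the induction does not close. Also, your appeal to the algorithm ``skipping the update when $y^k=0$'' is not supported by the algorithm as stated; like the paper, you are implicitly assuming $y^k\neq0$, and it would be cleaner to say so.
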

\begin{proof}
	First, we verify the claim, that \(\langle y^k, \widetilde{d}^k \rangle > 0\) is sufficient
	to assert positive definiteness of $B_{k+1}$. The proof of this is almost analog to the
	finite dimensional case, see, e.g.,~\cite{UlbrichUlbrich:2012}.
	To this end, let $p\in Q\setminus \{0\}$, then
	the update formula asserts
	\begin{align*}
		(p,B_{k+1}p)_Q =&\; (p,B_kp)_Q + \frac{(p,\widetilde{d}^k - B_ky^k)_Q(\widetilde{d}^k,p)_Q +(p,\widetilde{d}^k)_Q(\widetilde{d}^k-B_ky^k,p )_Q}{(\widetilde{d}^k, y^k)_Q} \\
		&-  (p,\widetilde{d}^k)_Q\frac{(\widetilde{d}^k-B_ky^k, y^k)_Q}{(\widetilde{d}^k, y^k)_Q^2}(\widetilde{d}^k,p)_Q.\\
		\intertext{By expanding the second and third summand, it equals to }
		=&\; (p,B_kp)_Q + 2\frac{(p,\widetilde{d}^k)_Q^2 -(p,B_ky^k)_Q(p,\widetilde{d}^k)_Q}{(\widetilde{d}^k, y^k)_Q} \\
		&-  (p,\widetilde{d}^k)_Q^2\frac{(\widetilde{d}^k, y^k)_Q-(B_ky^k, y^k)_Q}{(\widetilde{d}^k, y^k)_Q^2}\\
		\intertext{Again, expanding and re-sort terms provides }
		=&\; (p,B_kp)_Q + \frac{(p,\widetilde{d}^k)_Q^2 }{(\widetilde{d}^k, y^k)_Q}
		- 2\frac{ (p,B_ky^k)_Q(p,\widetilde{d}^k)_Q}{(\widetilde{d}^k, y^k)_Q} 
		+  (p,\widetilde{d}^k)_Q^2\frac{(B_ky^k, y^k)_Q}{(\widetilde{d}^k, y^k)_Q^2}.\\
		\intertext{By adding \(\pm \frac{(B_ky^k,\widetilde{d}^k)_Q^2}{(y^k,B_ky^k)_Q}\), we obtain }
		=&\; (p,B_kp)_Q + \frac{(p,\widetilde{d}^k)_Q^2 }{(\widetilde{d}^k, y^k)_Q}
		- \frac{(B_ky^k,\widetilde{d}^k)_Q^2}{(y^k,B_ky^k)_Q}\\
		&+\frac{(B_ky^k,\widetilde{d}^k)_Q^2}{(y^k,B_ky^k)_Q}
		- 2\frac{ (p,B_ky^k)_Q(p,\widetilde{d}^k)_Q}{(\widetilde{d}^k, y^k)_Q} 
		+  (p,\widetilde{d}^k)_Q^2\frac{(B_ky^k, y^k)_Q}{(\widetilde{d}^k, y^k)_Q^2},\\
		\intertext{where we consider by binomial formula }
		=&\; (p,B_kp)_Q + \frac{(p,\widetilde{d}^k)_Q^2 }{(\widetilde{d}^k, y^k)_Q}
		- \frac{(B_ky^k,\widetilde{d}^k)_Q^2}{(y^k,B_ky^k)_Q}\\
		&+(B_ky^k, y^k)_Q\left[
		\frac{(B_ky^k,\widetilde{d}^k)_Q}{(B_ky^k,y^k)_Q}
		-  \frac{(p,\widetilde{d}^k)_Q}{(\widetilde{d}^k, y^k)_Q}.
		\right]^2
	\end{align*}
	Under the assumption that \(B_k\) is positive definite, only the third summand can be negative, using a square
	root $B_k^{1/2}$ of the positive definite operator $B_k$, see, e.g.,~\cite{Kubrusly2012}, and Cauchy-Schwarz inequality one obtains
	\begin{align*}
		(p,B_{k+1}p)_Q &\ge(p,B_kp)_Q + \frac{(p,\widetilde{d}^k)_Q^2 }{(\widetilde{d}^k, y^k)_Q}
		- \frac{(B_ky^k,\widetilde{d}^k)_Q^2}{(y^k,B_ky^k)_Q}\\
		&\ge \|B_k^{1/2}p\|_Q + \frac{(p,\widetilde{d}^k)_Q^2 }{(\widetilde{d}^k, y^k)_Q}
		- \frac{\|B_k^{1/2}y^k\|_Q^2\|B_k^{1/2}\widetilde{d}^k\|_Q^2}{\|B_k^{1/2}y^k\|_Q^2}
		\ge \frac{(p,\widetilde{d}^k)_Q^2 }{(\widetilde{d}^k, y^k)_Q} \ge 0.
	\end{align*}
	Now, either, $B_k^{1/2}y^k$ and $B_k^{1/2}\widetilde{d}^k$ are linear independent, then the
	second inequality coming from Cauchy-Schwarz inequality is strict, or there is some $t \ne 0$
	such that
	\[
	B_k^{1/2}p = tB_k^{1/2}\widetilde{d}^k
	\]
	and then
	\[
	\frac{(p,\widetilde{d}^k)_Q^2 }{(\widetilde{d}^k, y^k)_Q} =\frac{(B_k^{-1/2}B_k^{1/2}p,\widetilde{d}^k)_Q^2 }{(\widetilde{d}^k, y^k)_Q} = t^2 \frac{(B_k^{-1/2}B_k^{1/2}\widetilde{d}^k,\widetilde{d}^k)_Q^2 }{(\widetilde{d}^k, y^k)_Q}= t^2 \frac{(\widetilde{d}^k,\widetilde{d}^k)_Q^2 }{(\widetilde{d}^k, y^k)_Q} >0
	\]
	implying positive definiteness of $B_{k+1}$.
	
	Now, it remains to show that $(y_k,\widetilde{d}^k)_Q > 0$.
	In the case \(\theta = 1 \), clearly, with \(\xi \in (0,1)\),
	\begin{equation*}
		(y^k, \widetilde{d}^k)_Q \geq \xi \, (y^k, B_ky^k)_Q > 0.	
	\end{equation*}  
	Otherwise, if \(\theta \neq 1\), it is 
	\begin{align*}
		(y^k, \widetilde{d}^k)_Q =&\; (y^k, \theta_k \widetilde{d}^k + (1-\theta_k) B_ky^k)_Q\\
		=&\;(1-\xi)\frac{( y^k, B_ky^k )_Q}{( y^k, B_ky^k )_Q - ( y^k, \widetilde{d}^k)_Q}( y^k, \widetilde{d}^k )_Q \\
		&+ \left(1- (1-\xi)\frac{( y^k, B_ky^k )_Q}{( y^k, B_ky^k )_Q - ( y^k, d^k)_Q}\right )( y^k, B_k y^k )_Q \\
		=&\; (1-\xi)\frac{( y^k, B_ky^k )_Q}{( y^k, B_ky^k )_Q - ( y^k, \widetilde{d}^k)_Q}( y^k, \widetilde{d}^k )_Q \\
		&+  \frac{( y^k, B_ky^k )_Q - ( y^k, \widetilde{d}^k)_Q - (1-\xi) ( y^k, B_ky^k )_Q}{( y^k, B_ky^k )_Q - ( y^k, \widetilde{d}^k)_Q}( y^k, B_k y^k )_Q \\
		=&\;\frac{( y^k, B_ky^k )_Q \cdot \left((1-\xi) ( y^k, \widetilde{d}^k )_Q - ( y^k,\widetilde{d}^k)_Q + \xi( y^k, B_ky^k )_Q \right) }{( y^k, B_ky^k )_Q - ( y^k, \widetilde{d}^k)_Q}\\
		=&\; \frac{( y^k, B_ky^k)_Q - \xi( y^k,\widetilde{d}^k )_Q + \xi( y^k , B_ky^k)_Q}
		{( y^k, B_ky^k )_Q - ( y^k, \widetilde{d}^k)_Q} \\
		=&\; \xi( y^k, B_ky^k )_Q > 0.
	\end{align*}
	Similar to the finite dimensional damping, see \cite{UlbrichUlbrich:2012}, we choose  \(\xi = 0.2\).
	Thus the assertion is shown.
\end{proof}

\begin{algorithm}
	\caption{Damped inverse BFGS method for an eigenvalue shape optimization problem}	
	\label{BFGSInverseAlgorithm}
	\begin{algorithmic}[1] 
		\REQUIRE Let \(q^0\in Q\) be an initial guess for the control
		and the initial inverse Hessian approximation be \(B_0 \in \mathcal{L}(Q,Q)\) (symmetric, positive definite). We choose the parameter TOL > 0, \(k_{\max} \in  \mathbb{N}\),
		and \(\gamma \in (0, 0.5)\).
		\WHILE{\(\Vert \nabla_Q j(q^k) \Vert_Q > \text{TOL} \)}
		\STATE Solve the  state eigenvalue problem~\eqref{eq:state} to obtain \(\lambda, u \in \mathbb{R}\times U\).
		\STATE Solve the adjoint eigenvalue problem~\eqref{eq:adjoint} to obtain \(\overline{\lambda}, z \in \mathbb{R}\times Z\).
		\STATE  Compute the gradient \(\nabla_Q j(q^k)\), using~\eqref{eq:gradient}.
		\STATE  Compute the search direction
		\begin{equation*}
			d^k = -B_k\nabla_Q j(q^k).
		\end{equation*}
		\STATE Set \(q_{k+1} = q^k + t_kd^k\), where \(t_k\) is computed by backtracking line search, i.e., $t_k = \max(1,\rho,\rho^2,\ldots)$ for some $\rho\in (0,1)$ such that 
		\begin{equation*}
			j(q^k - t_kd^k) \leq j(q^k) + \gamma t_k (\nabla_Qj(q^k),d^k)_Q
		\end{equation*}
		holds.
		\STATE Set \(\widetilde{d}^k=t_kd^k = q^{k+1} - q^k\)  and \(y^k = \nabla_Q j(q^{k+1}) - \nabla_Q j(q^k)\).
		\STATE \textit{Damping step:}
		\IF{\( (y^k, \widetilde{d}^k)_Q >= \xi (y^k, B_k y^k)_Q\)}
		\STATE \(\theta_k = 1\)
		\ELSE
		\STATE	\(\theta_k =  (1-\xi)\frac{(y^k, B_k y^k)_Q}{(y^k, B_k y^k)_Q - (\widetilde{d}^k, y^k)_Q}\)
		\ENDIF
		\STATE \(\widetilde{d}^k = \theta_k \widetilde{d}^k + (1-\theta_k) B_ky^k \)
		\STATE  Update $B_{k+1}\in \mathcal{L}(Q,Q)$ such that for any $p \in Q$ it holds
		\begin{align*}
			B_{k+1}p &= B_kp + \frac{(\widetilde{d}^k - B_ky^k)(\widetilde{d}^k,p)_Q +\widetilde{d}^k(\widetilde{d}^k-B_ky^k,p )_Q}{(\widetilde{d}^k, y^k)_Q} \\
			&\quad-  \widetilde{d}^k\frac{(\widetilde{d}^k-B_ky^k, y^k)_Q}{(\widetilde{d}^k, y^k)_Q^2}(\widetilde{d}^k,p)_Q
		\end{align*}
		\STATE Set \(k \leftarrow k+1\).
		\ENDWHILE
	\end{algorithmic}
	\end{algorithm}	
	
	
	\section{Numerical Examples on a 5-cell Cavity}
	\label{sec:examples}
	In this section, we show numerical results of the damped inverse BFGS algorithm from Algorithm~\ref{BFGSInverseAlgorithm} using adjoint calculus in order to solve the eigenvalue optimization problem \eqref{EVP}. 
	We consider a two-dimensional planar 5-cell cavity geometry inspired by a low \(\beta\) cavity design from \cite[Chapter 6.4]{NumMetForEst} on a model for the Superconducting-DArmstadt-LINear-ACcelerator (S-DALINAC)\footnote{\url{http://www.ikp.tu-darmstadt.de/sdalinac_ikp/}}.
	Moreover, we consider to optimize the first (smallest) eigenvalue \(\lambda_0\) to a target eigenvalue \(\lambda_{*}\). 
	We clarify here, that the chosen eigenvalue to optimize is not necessary the first one. The only restriction to the theory is that we choose a simple eigenvalue. 
	
	We discretize the function spaces with a mixed finite element method. We discretize the space of \(H^1_0 (\Omega)\) with Lagrange elements. To ensure the tangential continuity of the \(H_0 (\operatorname{curl}; \Omega)\), we discretize the function space with Nédélec elements. 
	For more details for finite elements and their properties, we refer to, e.g., ~\cite{monk2003finite,Boffi2010FiniteEA}. 
	For the numerical computation, we use the \texttt{C++}-based optimization library \texttt{DOpElib} \cite{DOpElib}, which bases on the finite element library \texttt{deal.II}~\cite{dealII92,dealII91}.
	Further, the eigenvalue problems within the optimization are solved by the Krylov Schur  eigenvalue solver with a shift-invert spectral transformation provided in the \texttt{SLEPc} library version 3.18.2 \cite{slepc2,slepc1} which bases on the \texttt{PETSc} library version 3.18.5 \cite{petsc-user-ref,petsc-web-page,petsc-efficient}. The convergence tolerance for the eigenvalue solver is set to \(10^{-5}\). 
	The eigenvalue solver applies a Richardson Krylov solver and the Cholesky preconditioner chosen with the default properties of the \texttt{SLEPc}- and \texttt{PETSc}-libraries for the first example, whereas in the second example, we set the relative convergence tolerance of the Richardson Krylov solver to \(10^{-5}\) and allow this method to automatically determine optimal scaling at each iteration to minimize the 2-norm of the preconditioned residual.
	
	In the following, we consider the starting design of a so-called low \(\beta\) cavity, where the cells get longer in order to take into account the increasing speed of the particle bunch. The geometry is described with the parameters shown in \Cref{tab:cavitygeometry} and the initial domain of the 5-cell cavity is shown in \Cref{fig:geometry}. 
	\begin{figure}[h!]
		\centering
		\includegraphics[width=0.4\linewidth]{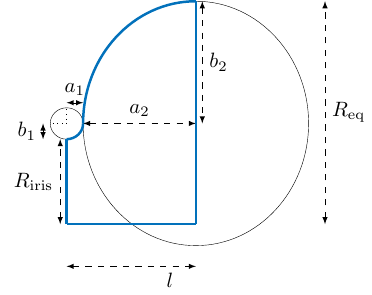}
		\caption[Geometry of a Cavity.]{Geometry of a Cavity.}
		\label{fig:geometry}
	\end{figure}
	\begin{table}[h!]
		\caption[Cavity design parameters for the different cells, see \cite{NumMetForEst}.]{Cavity design parameters for the different cells, see \cite{NumMetForEst}. All dimensions are given in \si{\milli\m}.}
		\label{tab:cavitygeometry}\centering
		\begin{tabular}[]{p{4.65cm}|p{1.225cm}|p{1.225cm}|p{1.225cm}|p{1.225cm}|p{1.225cm}}
			Cavity Shape Parameter & Cell 1 & Cell 2 & Cell 3 & Cell 4 & Cell 5\\ \hline
			Equator radius \(R_{\mathrm{eq}}\) 	& 43.44542 & 43.44542 & 43.44542 & 43.44542 & 43.44542\\
			Iris radius \(R_{\mathrm{iris}}\)& 16.5627 	&16.5627 	&16.5627 	& 16.5627	&16.5627 \\
			Horizontal half axis at iris \(a_1\) 	& 3.2 		& 3.7184 	& 4.32 		& 5.0208 	& 5.8336 \\
			Vertical half axis at iris \(b_1\) 	& 3.0592 	& 3.0592	& 3.0592	& 3.0592	& 3.0592\\
			Horizontal half axis at equator \(a_2\) 	& 21.98 	& 25.54076 & 29.673 	& 34.48662 & 40.06954\\
			Vertical half axis at equator \(b_2\) 	& 23.82352 & 23.82352 & 23.82352 & 23.82352 & 23.82352\\
			Length \(L \)		& 25.18 	& 29.25916 & 33.993 	& 39.50742 & 45.90314
		\end{tabular}
	\end{table}
	\begin{figure}[h!]\centering
		\includegraphics[width=\linewidth]{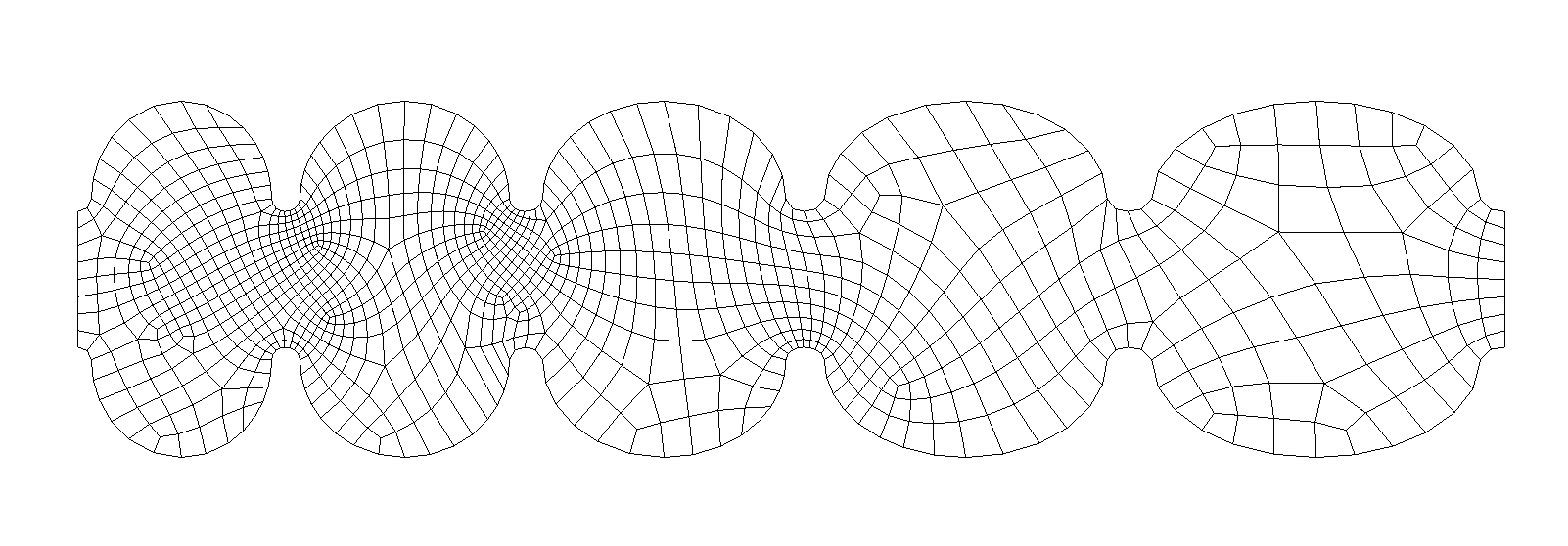}\\
		\caption[Mesh of the 5-cell Cavity]{Mesh of the 5-cell Cavity}
		\label{fig:mesh_cavity}
	\end{figure}
	The meshes of the geometries are created with GMSH~\cite{gmesh} and shown in \Cref{fig:mesh_cavity}.
	Further, we set the maximum number of iterations of the BFGS method \mbox{to 100,} the termination tolerance is set to \(\text{TOL } = 10^{-7}\). For the Armijo line search, we set the maximum number of iterations to 10 and \(\gamma = \rho = 0.1\).
	Furthermore, we set the target value \(\lambda_{*} = 6017\) and the regularization parameters \(\alpha = 100, \, \beta = 10^{-6}\) \mbox{and \(\varepsilon = 10^{-4}\).}
	The results in \Cref{tab:5cellcav} show that by a discretization with 22103 DoFs or higher, the BFGS method converges after 16 iterations. For a setting \mbox{with 5438 DoFs,} it takes 12. The values of \(\mathrm{J}_q\) are slightly smaller than 1, which means that we obtain a small shrinkage of the domain. The deformation of a 5-cell cavity after optimization is shown in \Cref{Fig:5cellcav}.
	
	\begin{table}[h!]
		\caption[Results of a freeform optimization with
                target value  \(\lambda_{*} = 6017\) using the BFGS
                method from \cref{BFGSInverseAlgorithm} with \(\alpha
                = 100\) and \(\beta= 10^{-6}\) on a 5-cell
                cavity.]{Results of a freeform optimization with
                  target value  \(\lambda_{*} = 6017\) using the BFGS
                  method from \cref{BFGSInverseAlgorithm} with
                  \(\alpha = 100\) and \(\beta= 10^{-6}\) on a 5-cell cavity. The initial first (numerical) eigenvalue is given by \(\lambda_{0}\). We show the eigenvalue \(\lambda_{\mathrm{it}}\) after termination of the method and the resulting value of the cost functional \(J\), the relative residual of the gradient of the cost functional as well as the minimum and maximum value of the determinant of the deformation gradient \(\mathrm{J}_q\).}
		\setlength{\tabcolsep}{2pt}
		\begin{tabular}{l|c|c|c|l||l|l|l|l|l|l}
			DoFs & ref. & Lagr. & Néd. & \(\lambda_{0}\)  & it. & \(\lambda_{\mathrm{it}.}\) & \( J \)& \(\textrm{r}_{\mathrm{rel}}\) &\(\mathrm{J}_{q,\min}\)  & \(\mathrm{J}_{q,\max}\) \\ \hline \hline
			5438  & 0 & 1 & 0 & 6018.47 & 12 & 6017 & 1.728e-10 & 7.846e-07 & 0.99998 & 0.99999 \\
			22103 & 0 & 2 & 1 & 6022.89 & 16 & 6017 & 2.691e-09 & 9.067e-07 & 0.99987 & 0.99999\\			\hline
			22103 & 1 & 1 & 0 & 6021.91 & 16 & 6017 & 1.878e-09 & 7.243e-07 & 0.99993 & 0.99998 \\ \hline
			86723 & 2 & 1 & 0 & 6024.65 & 16  & 6017 & 4.532e-09 &  9.520e-07 & 0.99991 & 0.99998 \\ 
		\end{tabular}
		\label{tab:5cellcav}
	\end{table}
	
	\begin{figure}[htp]
		\centering
		\includegraphics[width=0.925\linewidth]{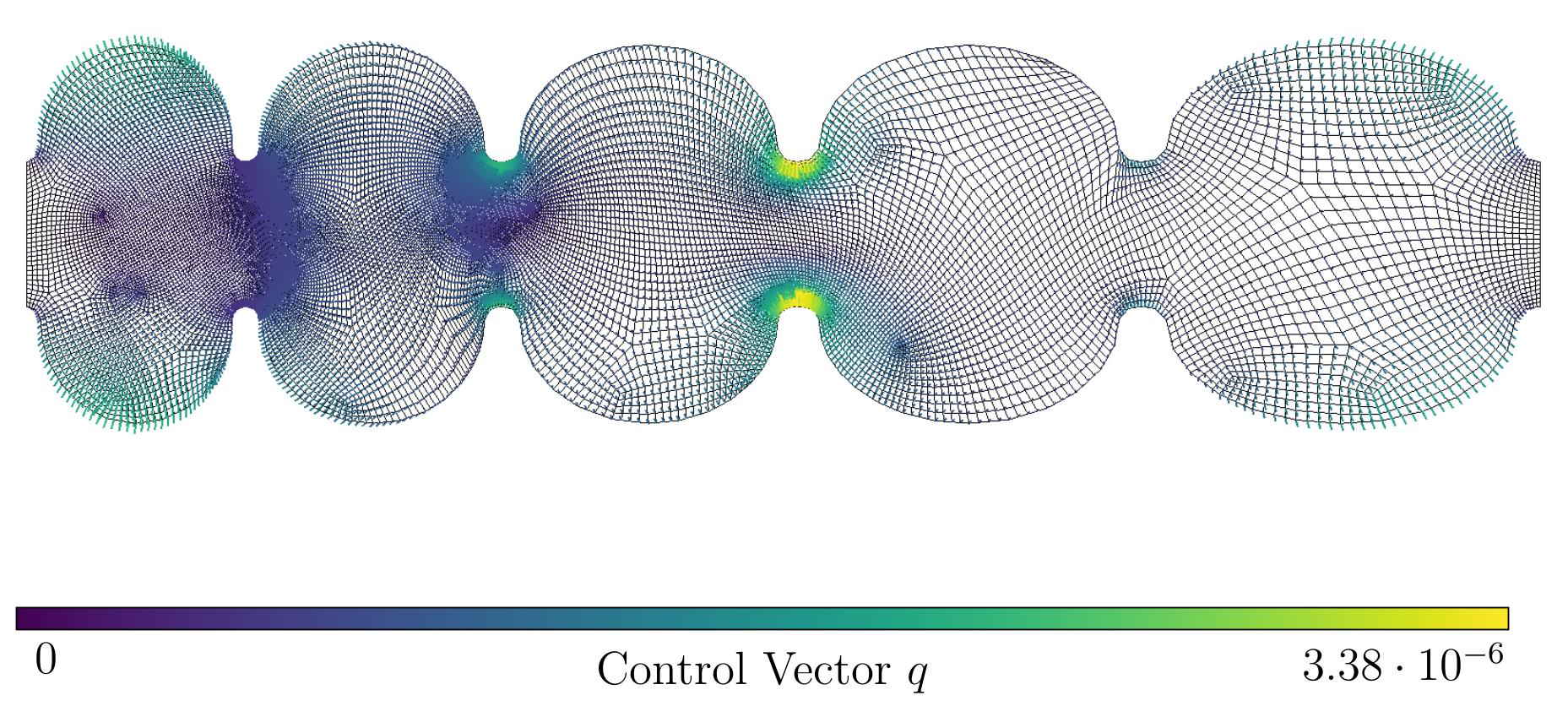}
	\caption[Deformation of a 5-cell cavity after optimization of the first eigenvalue to target value \(\lambda_{*} = 6017\) using the BFGS method from \cref{BFGSInverseAlgorithm} with regularization parameters \(\alpha = 100\) and \(\beta = 10^{-6}\).]{Deformation of a 5-cell cavity after optimization of the first eigenvalue to target value \(\lambda_{*} = 6017\) using the BFGS method from \cref{BFGSInverseAlgorithm} with regularization parameters \(\alpha = 100\) and \(\beta = 10^{-6}\). 
		The chosen refinement level is 2, the number of DoFs is 86723. Lagrange elements of order 1 and lowest order Nédélec elements are used.}
	\label{Fig:5cellcav}
\end{figure}

\section{Conclusion}
In this paper, we considered an eigenvalue optimization problem with respect to shape-variations in electromagnetic systems motivated by particle accelerator cavities, such as superconducting TESLA or low \(\beta\) cavities, see \cite{PhysRevSTAB,NumMetForEst, UncertaintyQuantiNiklas}.
We formulated an optimization problem with the mixed formulation by Kikuchi and a domain mapping, where we distinguished between the function spaces of \(H_0(\text{curl})\) and \(H^1_0\). 
We discussed the approach of adjoint calculus for general eigenvalue optimization problems and  we applied adjoint calculus to the concrete problem.
In order to solve this optimization problem, we discussed a damped inverse BFGS method for infinite dimensional problems.
We proved the preservation of the positive definiteness property of the updating operator which ensures that the curvature condition is fulfilled.
Finally, we showed the functionality of this method on a numerical example of an optimization of a cavity domain.

	\bibliographystyle{abbrvnat}
        \bibliography{bibdatabase.bib}

\begin{thebibliography}{54}
\providecommand{\natexlab}[1]{#1}
\providecommand{\url}[1]{\texttt{#1}}
\expandafter\ifx\csname urlstyle\endcsname\relax
  \providecommand{\doi}[1]{doi: #1}\else
  \providecommand{\doi}{doi: \begingroup \urlstyle{rm}\Url}\fi

\bibitem[Allaire and Jouve(2008)]{ALLAIRE_2008_stressoptdesign}
G.~Allaire and F.~Jouve.
\newblock Minimum stress optimal design with the level set method.
\newblock \emph{Engineering Analysis with Boundary Elements}, 32\penalty0
  (11):\penalty0 909--918, 2008.
\newblock \doi{10.1016/j.enganabound.2007.05.007}.

\bibitem[Allaire et~al.(2004)Allaire, Jouve, and
  Toader]{ALLAIRE_2004_structureoptimization}
G.~Allaire, F.~Jouve, and A.-M. Toader.
\newblock Structural optimization using sensitivity analysis and a level-set
  method.
\newblock \emph{Journal of Computational Physics}, 194\penalty0 (1):\penalty0
  363--393, 2004.
\newblock \doi{10.1016/j.jcp.2003.09.032}.

\bibitem[Allaire et~al.(2011)Allaire, Jouve, and {Van
  Goethem}]{ALLAIRE_2011_damage}
G.~Allaire, F.~Jouve, and N.~{Van Goethem}.
\newblock Damage and fracture evolution in brittle materials by shape
  optimization methods.
\newblock \emph{Journal of Computational Physics}, 230\penalty0 (12):\penalty0
  5010--5044, 2011.
\newblock \doi{10.1016/j.jcp.2011.03.024}.

\bibitem[Allaire et~al.(2014)Allaire, Dapogny, and Frey]{ALLAIRE_2014_shapeopt}
G.~Allaire, C.~Dapogny, and P.~Frey.
\newblock Shape optimization with a level set based mesh evolution method.
\newblock \emph{Computer Methods in Applied Mechanics and Engineering},
  282:\penalty0 22--53, 2014.
\newblock \doi{10.1016/j.cma.2014.08.028}.

\bibitem[Andrew et~al.(1993)Andrew, Chu, and
  Lancaster]{AndrewChuLancaster:1993}
A.~L. Andrew, K.-W.~E. Chu, and P.~Lancaster.
\newblock Derivatives of eigenvalues and eigenvectors of matrix functions.
\newblock \emph{SIAM J. Matrix Anal. Appl.}, 14\penalty0 (4):\penalty0
  903--926, 1993.
\newblock \doi{10.1137/0614061}.

\bibitem[Arndt et~al.(2019)Arndt, Bangerth, Clevenger, Davydov, Fehling,
  Garcia-Sanchez, Harper, Heister, Heltai, Kronbichler, Kynch, Maier, Pelteret,
  Turcksin, and Wells]{dealII91}
D.~Arndt, W.~Bangerth, T.~C. Clevenger, D.~Davydov, M.~Fehling,
  D.~Garcia-Sanchez, G.~Harper, T.~Heister, L.~Heltai, M.~Kronbichler, R.~M.
  Kynch, M.~Maier, J.-P. Pelteret, B.~Turcksin, and D.~Wells.
\newblock The \texttt{deal.II} library, version 9.1.
\newblock \emph{Journal of Numerical Mathematics}, 27\penalty0 (4):\penalty0
  203--213, 2019.
\newblock \doi{10.1515/jnma-2019-0064}.

\bibitem[Arndt et~al.(2020)Arndt, Bangerth, Blais, Clevenger, Fehling, Grayver,
  Heister, Heltai, Kronbichler, Maier, Munch, Pelteret, Rastak, Thomas,
  Turcksin, Wang, and Wells]{dealII92}
D.~Arndt, W.~Bangerth, B.~Blais, T.~C. Clevenger, M.~Fehling, A.~V. Grayver,
  T.~Heister, L.~Heltai, M.~Kronbichler, M.~Maier, P.~Munch, J.-P. Pelteret,
  R.~Rastak, I.~Thomas, B.~Turcksin, Z.~Wang, and D.~Wells.
\newblock The \texttt{deal.II} library, version 9.2.
\newblock \emph{Journal of Numerical Mathematics}, 28\penalty0 (3):\penalty0
  131--146, 2020.
\newblock \doi{10.1515/jnma-2020-0043}.

\bibitem[Aune et~al.(2000)Aune, Bandelmann, Bloess, Bonin, Bosotti, Champion,
  Crawford, Deppe, Dwersteg, Edwards, Edwards, Ferrario, Fouaidy, Gall, Gamp,
  G{\"o}ssel, Graber, Hubert, H{\"u}ning, Juillard, Junquera, Kaiser, Kreps,
  Kuchnir, Lange, Leenen, Liepe, Lilje, Matheisen, M{\"o}ller, Mosnier,
  Padamsee, Pagani, Pekeler, Peters, Peters, Proch, Rehlich, Reschke, Safa,
  Schilcher, Schm{\"u}ser, Sekutowicz, Simrock, Singer, Tigner, Trines,
  Twarowski, Weichert, Weisend, Wojtkiewicz, Wolff, and Zapfe]{PhysRevSTAB}
B.~Aune, R.~Bandelmann, D.~Bloess, B.~Bonin, A.~Bosotti, M.~Champion,
  C.~Crawford, G.~Deppe, B.~Dwersteg, D.~A. Edwards, H.~T. Edwards,
  M.~Ferrario, M.~Fouaidy, P.-D. Gall, A.~Gamp, A.~G{\"o}ssel, J.~Graber,
  D.~Hubert, M.~H{\"u}ning, M.~Juillard, T.~Junquera, H.~Kaiser, G.~Kreps,
  M.~Kuchnir, R.~Lange, M.~Leenen, M.~Liepe, L.~Lilje, A.~Matheisen, W.-D.
  M{\"o}ller, A.~Mosnier, H.~Padamsee, C.~Pagani, M.~Pekeler, H.-B. Peters,
  O.~Peters, D.~Proch, K.~Rehlich, D.~Reschke, H.~Safa, T.~Schilcher,
  P.~Schm{\"u}ser, J.~Sekutowicz, S.~Simrock, W.~Singer, M.~Tigner, D.~Trines,
  K.~Twarowski, G.~Weichert, J.~Weisend, J.~Wojtkiewicz, S.~Wolff, and
  K.~Zapfe.
\newblock Superconducting {TESLA} cavities.
\newblock \emph{Phys. Rev. ST Accel. Beams}, 3:\penalty0 092001, 2000.
\newblock \doi{10.1103/PhysRevSTAB.3.092001}.

\bibitem[Balay et~al.(1997)Balay, Gropp, McInnes, and Smith]{petsc-efficient}
S.~Balay, W.~D. Gropp, L.~C. McInnes, and B.~F. Smith.
\newblock Efficient management of parallelism in object oriented numerical
  software libraries.
\newblock In E.~Arge, A.~M. Bruaset, and H.~P. Langtangen, editors,
  \emph{Modern Software Tools in Scientific Computing}, pages 163--202.
  Birkhauser Press, 1997.
\newblock \doi{10.1007/978-1-4612-1986-6_8}.

\bibitem[Balay et~al.(2021{\natexlab{a}})Balay, Abhyankar, Adams, Brown, Brune,
  Buschelman, Dalcin, Dener, Eijkhout, Gropp, Kaushik, Knepley, May, McInnes,
  Mills, Munson, Rupp, Sanan, Smith, Zampini, Zhang, and Zhang]{petsc-user-ref}
S.~Balay, S.~Abhyankar, M.~F. Adams, J.~Brown, P.~Brune, K.~Buschelman,
  L.~Dalcin, A.~Dener, V.~Eijkhout, W.~D. Gropp, D.~Kaushik, M.~G. Knepley,
  D.~A. May, L.~C. McInnes, R.~T. Mills, T.~Munson, K.~Rupp, P.~Sanan, B.~F.
  Smith, S.~Zampini, H.~Zhang, and H.~Zhang.
\newblock {PETS}c users manual.
\newblock Technical Report ANL-95/11 - Revision 3.15, Argonne National
  Laboratory, 2021{\natexlab{a}}.

\bibitem[Balay et~al.(2021{\natexlab{b}})Balay, Abhyankar, Adams, Brown, Brune,
  Buschelman, Dalcin, Dener, Eijkhout, Gropp, Kaushik, Knepley, May, McInnes,
  Mills, Munson, Rupp, Sanan, Smith, Zampini, Zhang, and Zhang]{petsc-web-page}
S.~Balay, S.~Abhyankar, M.~F. Adams, J.~Brown, P.~Brune, K.~Buschelman,
  L.~Dalcin, A.~Dener, V.~Eijkhout, W.~D. Gropp, D.~Kaushik, M.~G. Knepley,
  D.~A. May, L.~C. McInnes, R.~T. Mills, T.~Munson, K.~Rupp, P.~Sanan, B.~F.
  Smith, S.~Zampini, H.~Zhang, and H.~Zhang.
\newblock {PETS}c {W}eb page, 2021{\natexlab{b}}.
\newblock URL \url{https://petsc.org/}.

\bibitem[Becker et~al.(2007)Becker, Meidner, and
  Vexler]{BeckerMeidnerVexler:2007}
R.~Becker, D.~Meidner, and B.~Vexler.
\newblock Efficient numerical solution of parabolic optimization problems by
  finite element methods.
\newblock \emph{Optim. Methods Softw.}, 22\penalty0 (5):\penalty0 813--833,
  2007.
\newblock \doi{10.1080/10556780701228532}.

\bibitem[Bezbaruah et~al.(2024)Bezbaruah, Maier, and
  Wollner]{BezbaruahMaierWollner:2023}
M.~Bezbaruah, M.~Maier, and W.~Wollner.
\newblock Shape optimization of optical microscale inclusions.
\newblock \emph{SIAM J. Sci. Comput.}, 46\penalty0 (4):\penalty0 B377--B402,
  2024.
\newblock \doi{10.1137/23M158262}.

\bibitem[Boffi(2010)]{Boffi2010FiniteEA}
D.~Boffi.
\newblock Finite element approximation of eigenvalue problems.
\newblock \emph{Acta Numer.}, 19:\penalty0 1--120, 2010.
\newblock \doi{10.1017/S0962492910000012}.

\bibitem[Cea(1986)]{cea_1986_calcul}
J.~Cea.
\newblock Conception optimale ou identification de formes, calcul rapide de la
  d\'eriv\'ee directionnelle de la fonction co\^ut.
\newblock \emph{RAIRO - Mod\'elisation math\'ematique et analyse num\'erique},
  20\penalty0 (3):\penalty0 371--402, 1986.
\newblock \doi{10.1051/m2an/1986200303711}.

\bibitem[Cohen(2002)]{HigherOrderNumMeth}
G.~Cohen.
\newblock \emph{Higher-Order Numerical Methods for Transient Wave Equations}.
\newblock Scientific computation. Springer, 2002.
\newblock \doi{10.1007/978-3-662-04823-8}.

\bibitem[Corno(2017)]{NumMetForEst}
J.~Corno.
\newblock \emph{Numerical Methods for the Estimation of the Impact of Geometric
  Uncertainties on the Performance of Electromagnetic Devices}.
\newblock PhD thesis, Technische Universit{\"a}t, Darmstadt, 2017.
\newblock URL \url{http://tuprints.ulb.tu-darmstadt.de/7038/}.

\bibitem[Delfour and Zolésio(2011)]{delfour2011shapes}
M.~C. Delfour and J.~P. Zolésio.
\newblock \emph{Shapes and Geometries}.
\newblock Society for Industrial and Applied Mathematics, second edition, 2011.
\newblock \doi{10.1137/1.9780898719826}.

\bibitem[Fischer et~al.(2017)Fischer, Lindemann, Ulbrich, and
  Ulbrich]{Fischer2017Frechet}
M.~Fischer, F.~Lindemann, M.~Ulbrich, and S.~Ulbrich.
\newblock Fréchet differentiability of unsteady incompressible
  {N}avier--{S}tokes flow with respect to domain variations of low regularity
  by using a general analytical framework.
\newblock \emph{SIAM Journal on Control and Optimization}, 55\penalty0
  (5):\penalty0 3226--3257, 2017.
\newblock \doi{10.1137/16M1089563}.

\bibitem[Georg et~al.(2019)Georg, Ackermann, Corno, and
  Sch{\"o}ps]{UncertaintyQuantiNiklas}
N.~Georg, W.~Ackermann, J.~Corno, and S.~Sch{\"o}ps.
\newblock Uncertainty quantification for {M}axwell{\rq}s eigenproblem based on
  isogeometric analysis and mode tracking.
\newblock \emph{Computer Methods in Applied Mechanics and Engineering},
  350:\penalty0 228--244, 2019.
\newblock \doi{10.1016/j.cma.2019.03.002}.

\bibitem[Geuzaine and Remacle(2009)]{gmesh}
C.~Geuzaine and J.-F. Remacle.
\newblock Gmsh: A 3-d finite element mesh generator with built-in pre- and
  post-processing facilities.
\newblock \emph{International Journal for Numerical Methods in Engineering},
  79\penalty0 (11):\penalty0 1309--1331, 2009.
\newblock \doi{10.1002/nme.2579}.

\bibitem[Goll et~al.(2017)Goll, Wick, and Wollner]{DOpElib}
C.~Goll, T.~Wick, and W.~Wollner.
\newblock {DOpElib}: {D}ifferential equations and optimization environment; {A}
  goal oriented software library for solving pdes and optimization problems
  with pdes.
\newblock \emph{Archive of Numerical Software}, 5\penalty0 (2):\penalty0 1--14,
  2017.
\newblock \doi{10.11588/ans.2017.2.11815}.

\bibitem[Griewank(1987)]{Griewank:1987}
A.~Griewank.
\newblock The local convergence of {B}royden-like methods on {L}ipschitzian
  problems in {H}ilbert spaces.
\newblock \emph{SIAM J. Numer. Anal.}, 24\penalty0 (3):\penalty0 684--705,
  1987.
\newblock \doi{10.1137/07240}.

\bibitem[Haslinger and Mäkinen(2003)]{haslinge2003shapeintro}
J.~Haslinger and R.~A.~E. Mäkinen.
\newblock \emph{Introduction to Shape Optimization}.
\newblock Society for Industrial and Applied Mathematics, 2003.
\newblock \doi{10.1137/1.9780898718690}.

\bibitem[Heners et~al.(2018)Heners, Radtke, Hinze, and Düster]{Heners2018259}
J.~Heners, L.~Radtke, M.~Hinze, and A.~Düster.
\newblock Adjoint shape optimization for fluid–structure interaction of
  ducted flows.
\newblock \emph{Computational Mechanics}, 61\penalty0 (3):\penalty0 259--276,
  2018.
\newblock \doi{10.1007/s00466-017-1465-5}.

\bibitem[Hernandez et~al.(2003)Hernandez, Roman, and Vidal]{slepc2}
V.~Hernandez, J.~E. Roman, and V.~Vidal.
\newblock Slepc: Scalable library for eigenvalue problem computations.
\newblock In J.~M. L.~M. Palma, A.~A. Sousa, J.~Dongarra, and V.~Hern{\'a}ndez,
  editors, \emph{High Performance Computing for Computational Science ---
  VECPAR 2002}, volume 2565 of \emph{Lecture Notes in Computer Science}, pages
  377--391. Springer, 2003.
\newblock \doi{10.1007/3-540-36569-9_25}.

\bibitem[Hernandez et~al.(2005)Hernandez, Roman, and Vidal]{slepc1}
V.~Hernandez, J.~E. Roman, and V.~Vidal.
\newblock {SLEPc}: A scalable and flexible toolkit for the solution of
  eigenvalue problems.
\newblock \emph{ACM Trans. Math. Software}, 31\penalty0 (3):\penalty0 351--362,
  2005.
\newblock \doi{10.1145/1089014.108901}.

\bibitem[Herter et~al.(2023)Herter, Schöps, and Wollner]{cherter_PAMM}
C.~Herter, S.~Schöps, and W.~Wollner.
\newblock Eigenvalue optimization with respect to shape-variations in
  electromagnetic cavities.
\newblock \emph{PAMM}, 22\penalty0 (1):\penalty0 e202200122, 2023.
\newblock \doi{10.1002/pamm.202200122}.

\bibitem[Heuveline and
  Rannacher(2001)]{heuveline_rannacher_a_posteriori_error_control_evp}
V.~Heuveline and R.~Rannacher.
\newblock A posteriori error control for finite approximations of elliptic
  eigenvalue problems.
\newblock \emph{Adv. Comput. Math.}, 15:\penalty0 107--138, 2001.
\newblock \doi{10.1023/A:1014291224961}.

\bibitem[Hinze et~al.(2009)Hinze, Pinnau, Ulbrich, and
  Ulbrich]{optPDE2009UlbrichHinze}
M.~Hinze, R.~Pinnau, M.~Ulbrich, and S.~Ulbrich.
\newblock \emph{Optimization with {PDE} Constraints}, volume~23 of
  \emph{Mathematical Modelling: Theory and Applications}.
\newblock Springer, 2009.
\newblock \doi{10.1007/978-1-4020-8839-1}.

\bibitem[Jorkowski and Schuhmann(2018)]{modetracking_jorkowski_2018}
P.~Jorkowski and R.~Schuhmann.
\newblock Mode tracking for parametrized eigenvalue problems in computational
  electromagnetics.
\newblock In \emph{2018 International Applied Computational Electromagnetics
  Society Symposium (ACES)}, pages 1--2, 2018.
\newblock \doi{10.23919/ROPACES.2018.8364147}.

\bibitem[Kelley and Sachs(1991)]{KelleySachs:1991}
C.~T. Kelley and E.~W. Sachs.
\newblock A new proof of superlinear convergence for {B}royden's method in
  {H}ilbert space.
\newblock \emph{SIAM J. Optim.}, 1\penalty0 (1):\penalty0 146--150, 1991.
\newblock \doi{10.1137/0801011}.

\bibitem[Kikuchi(1987)]{Kikuchi1987MixedAP}
F.~Kikuchi.
\newblock Mixed and penalty formulations for finite element analysis of an
  eigenvalue problem in electromagnetism.
\newblock \emph{Computer Methods in Applied Mechanics and Engineering},
  64\penalty0 (1):\penalty0 509--521, 1987.
\newblock \doi{10.1016/0045-7825(87)90053-3}.

\bibitem[Kikuchi et~al.(1995)Kikuchi, Cheng, and Ma]{Kikuchi1995}
N.~Kikuchi, H.-C. Cheng, and Z.-D. Ma.
\newblock \emph{Optimal Shape and Topology Design of Vibrating Structures}.
\newblock Springer Netherlands, Dordrecht, 1995.
\newblock \doi{10.1007/978-94-011-0453-1_6}.

\bibitem[Kranj\ifmmode \check{c}\else \v{c}\fi{}evi\ifmmode~\acute{c}\else
  \'{c}\fi{} et~al.(2019)Kranj\ifmmode \check{c}\else
  \v{c}\fi{}evi\ifmmode~\acute{c}\else \'{c}\fi{}, Gorgi~Zadeh, Adelmann,
  Arbenz, and van Rienen]{multiobj2019rienen}
M.~Kranj\ifmmode \check{c}\else \v{c}\fi{}evi\ifmmode~\acute{c}\else
  \'{c}\fi{}, S.~Gorgi~Zadeh, A.~Adelmann, P.~Arbenz, and U.~van Rienen.
\newblock Constrained multiobjective shape optimization of superconducting rf
  cavities considering robustness against geometric perturbations.
\newblock \emph{Phys. Rev. Accel. Beams}, 22:\penalty0 122001, 2019.
\newblock \doi{10.1103/PhysRevAccelBeams.22.122001}.

\bibitem[Kranjčević et~al.(2019)Kranjčević, Adelmann, Arbenz, Citterio, and
  Stingelin]{MultiobjectiveShapeOptimizationEvolutionaryAlgo_2019_kranj}
M.~Kranjčević, A.~Adelmann, P.~Arbenz, A.~Citterio, and L.~Stingelin.
\newblock Multi-objective shape optimization of radio frequency cavities using
  an evolutionary algorithm.
\newblock \emph{Nuclear Instruments and Methods in Physics Research Section A:
  Accelerators, Spectrometers, Detectors and Associated Equipment},
  920:\penalty0 106--114, 2019.
\newblock ISSN 0168-9002.
\newblock \doi{10.1016/j.nima.2018.12.066}.

\bibitem[Kubrusly(2012)]{Kubrusly2012}
C.~S. Kubrusly.
\newblock \emph{Spectral Theorem}, chapter~3, pages 55--89.
\newblock Birkh{\"a}user Boston, Boston, 2012.
\newblock ISBN 978-0-8176-8328-3.
\newblock \doi{10.1007/978-0-8176-8328-3_3}.

\bibitem[Lancaster(1964)]{Lancaster:1964}
P.~Lancaster.
\newblock On eigenvalues of matrices dependent on a parameter.
\newblock \emph{Numerische Mathematik}, 6:\penalty0 377--387, 1964.
\newblock \doi{10.1007/BF01386087}.

\bibitem[Lewis(2003)]{Lewis2003TheMO}
A.~S. Lewis.
\newblock The mathematics of eigenvalue optimization.
\newblock \emph{Mathematical Programming}, 97:\penalty0 155--176, 2003.
\newblock \doi{10.1007/s10107-003-0441-3}.

\bibitem[Monk(2003)]{monk2003finite}
P.~Monk.
\newblock \emph{Finite element methods for {M}axwell's equations}.
\newblock Numerical Mathematics and Scientific Computation. Oxford University
  Press, New York, 2003.
\newblock \doi{10.1093/acprof:oso/9780198508885.001.0001}.

\bibitem[Murat and Simon(1976{\natexlab{a}})]{Murat1976SurLC}
F.~Murat and J.~Simon.
\newblock Sur le contrôle par un domaine géométrique.
\newblock \emph{Laboratoire d'Analyse Numérique de l'Université de Paris VI},
  1976{\natexlab{a}}.
\newblock URL \url{https://api.semanticscholar.org/CorpusID:124497750}.

\bibitem[Murat and
  Simon(1976{\natexlab{b}})]{murat_simon_1976_etude_problems_optimal_design}
F.~Murat and J.~Simon.
\newblock Etude de problemes d'optimal design.
\newblock In J.~Cea, editor, \emph{Optimization Techniques Modeling and
  Optimization in the Service of Man Part 2}, volume~41 of \emph{Lecture Notes
  in Computer Science}, pages 54--62. Springer Berlin Heidelberg,
  1976{\natexlab{b}}.
\newblock \doi{10.1007/3-540-07623-9_279}.

\bibitem[Murat and Tartar(1997)]{Murat1997}
F.~Murat and L.~Tartar.
\newblock \emph{Calculus of Variations and Homogenization}, volume~31 of
  \emph{Progress in Nonlinear Differential Equations and Their Applications},
  chapter~6, pages 139--173.
\newblock Birkh{\"a}user Boston, Boston, MA, 1997.
\newblock \doi{10.1007/978-1-4612-2032-9_6}.

\bibitem[Nocedal and Wright(2006)]{Numerical_opt_NoceWrig06}
J.~Nocedal and S.~J. Wright.
\newblock \emph{Numerical Optimization}.
\newblock Springer, New York, NY, USA, 2e edition, 2006.
\newblock \doi{10.1007/978-0-387-40065-5}.

\bibitem[Powell(1978)]{Powell:1978}
M.~J.~D. Powell.
\newblock A fast algorithm for nonlinearly constrained optimization
  calculations.
\newblock In \emph{Numerical analysis ({P}roc. 7th {B}iennial {C}onf., {U}niv.
  {D}undee, {D}undee, 1977)}, volume 630 of \emph{Lecture Notes in Math.},
  pages 144--157, 1978.
\newblock \doi{10.1007/BFb0067703}.

\bibitem[Putek et~al.(2024)Putek, Zadeh, and {van Rienen}]{PUTEK2024113125}
P.~Putek, S.~G. Zadeh, and U.~{van Rienen}.
\newblock Multi-objective shape optimization of {TESLA}-like cavities:
  Addressing stochastic {M}axwell's eigenproblem constraints.
\newblock \emph{Journal of Computational Physics}, 513:\penalty0 113125, 2024.
\newblock \doi{10.1016/j.jcp.2024.113125}.

\bibitem[Rannacher et~al.(2010)Rannacher, Westenberger, and
  Wollner]{RannacherWestenbergerWollner+2010_adaptive_fem_evp}
R.~Rannacher, A.~Westenberger, and W.~Wollner.
\newblock Adaptive finite element solution of eigenvalue problems: Balancing of
  discretization and iteration error.
\newblock \emph{Journal of Numerical Mathematics}, 18\penalty0 (4):\penalty0
  303--327, 2010.
\newblock \doi{10.1515/JNUM.2010.015}.

\bibitem[Shemelin(2009)]{PhysRevSTAB.12.114701}
V.~Shemelin.
\newblock Optimal choice of cell geometry for a multicell superconducting
  cavity.
\newblock \emph{Phys. Rev. ST Accel. Beams}, 12:\penalty0 114701, 2009.
\newblock \doi{10.1103/PhysRevSTAB.12.114701}.

\bibitem[Sokolowski and Zolesio(1992)]{Sokolowski1992shape}
J.~Sokolowski and J.-P. Zolesio.
\newblock \emph{Introduction to shape optimization}.
\newblock Springer Berlin Heidelberg, Berlin, Heidelberg, 1992.
\newblock \doi{10.1007/978-3-642-58106-9_1}.

\bibitem[Toader and
  Barbarosie(2017)]{Opt_ev_adjoint_ToaderBarbarosie+2017+142+158}
A.-M. Toader and C.~Barbarosie.
\newblock 6. {O}ptimization of eigenvalues and eigenmodes by using the adjoint
  method.
\newblock In M.~Bergounioux, Édouard Oudet, M.~Rumpf, G.~Carlier, T.~Champion,
  and F.~Santambrogio, editors, \emph{Topological Optimization and Optimal
  Transport}, volume~17 of \emph{Radon Ser. Comput. Appl. Math.}, pages
  142--158. De Gruyter, Berlin, Boston, 2017.
\newblock \doi{10.1515/9783110430417-006}.

\bibitem[Tr\"oltzsch(2010)]{Troeltzsch:2010}
F.~Tr\"oltzsch.
\newblock \emph{Optimal control of partial differential equations}, volume 112
  of \emph{Graduate Studies in Mathematics}.
\newblock American Mathematical Society, Providence, RI, 2010.
\newblock \doi{10.1090/gsm/112}.

\bibitem[Udongwo et~al.(2023)Udongwo, Zadeh, Calaga, and van
  Rienen]{optimizationCavitiy2023rienen}
S.-A. Udongwo, S.~G. Zadeh, R.~Calaga, and U.~van Rienen.
\newblock Design and optimisation of an 800 {MHz} 5-cell elliptical {SRF}
  cavity for {T}¯t working point of the future circular {E}lectron-{P}ositron
  {C}ollide.
\newblock In \emph{Proc. IPAC'23}, number~14 in International Particle
  Accelerator Conference, pages 764--767. JACoW Publishing, Geneva,
  Switzerland, 2023.
\newblock \doi{10.18429/JACoW-IPAC2023-MOPL089}.

\bibitem[Ulbrich and Ulbrich(2012)]{UlbrichUlbrich:2012}
M.~Ulbrich and S.~Ulbrich.
\newblock \emph{Nichtlineare Optimierung}.
\newblock Mathematik Kompakt. Birkhäuser Basel, 2012.
\newblock \doi{10.1007/978-3-0346-0654-7}.

\bibitem[Ziegler et~al.(2023)Ziegler, Merkel, Gangl, and
  Schöps]{Anna_Ziegler_2023}
A.~Ziegler, M.~Merkel, P.~Gangl, and S.~Schöps.
\newblock On the computation of analytic sensitivities of eigenpairs in
  isogeometric analysis.
\newblock \emph{Computer Methods in Applied Mechanics and Engineering},
  409:\penalty0 115961, 2023.
\newblock \doi{10.1016/j.cma.2023.115961}.

\end{thebibliography}

\end{document}